\spnewtheorem{assumption}[lemma]{Assumption}{\bfseries}{\itshape}
\spnewtheorem{mytheorem}[lemma]{Theorem}{\bfseries}{\itshape}
\newcommand{\tfrac}[2]{\textstyle\frac{#1}{#2}}
\begin{document}

\title*{Stability preserving approximations of a semilinear hyperbolic gas transport model}
\titlerunning{Stability preserving approximations of a semilinear hyperbolic gas transport model}
\author{Herbert Egger, Thomas Kugler and Bj\"orn Liljegren-Sailer}
\institute{Herbert Egger, Thomas Kugler \at Technische Universit\"at Darmstadt, Germany, \email{\{egger,kugler\}@mathematik.tu-darmstadt.de} 
\and Bj\"orn Liljegren-Sailer \at Universit\"at Trier, Germany, \email{bjoern.sailer@uni-trier.de}}

\maketitle

\def\myabstract{We consider the discretization of a semilinear damped wave equation arising, for instance, in the modeling of gas transport in pipeline networks. 
For time invariant boundary data, the solutions of the problem are shown to converge exponentially fast to steady states. We further prove that this decay behavior 
is inherited uniformly by a class of Galerkin approximations, including finite element, spectral, and structure preserving model reduction methods.  
These theoretical findings are illustrated by numerical tests.}

\abstract*{\myabstract}

\abstract{\myabstract}

\section{Introduction} \label{sec:1}

The propagation of pressure waves through a network of pipes can be described by a semilinear hyperbolic system on each pipe together with appropriate coupling conditions \cite{BrouwerGasserHerty11,Mugnolo14}. Due to friction at the pipe walls, the kinetic energy of the gas flow gets damped resulting in a dissipative behavior and, as a consequence, the system relaxes to steady states exponentially fast; see \cite{GattiPata06}. 
While structure preserving model reduction methods \cite{Gugercin12} allow to guarantee the dissipative nature also after discretization, 
the rates of the exponential decay in the discretized models may in general degenerate with the discretization parameter; see e.g. \cite{Tebou03}. 

In this work we extend our previous results \cite{EggerKuglerEtAl18} to problems with nonlinear damping and make the following contributions: 
First, we prove the exponential decay for the infinite dimensional problem in a form that can be extended to pipe networks. 
Second, we analyze a class of Galerkin discretizations which inherit the exponential decay behavior uniformly in the discretization parameter.
%

\section{Analytical results} \label{sec:2}

We consider the semilinear instationary wave propagation problem
\begin{eqnarray}
\partial_{t} p(x,t) + \partial_{x} m(x,t) &=& \bar{f}(x),  \qquad x\in(0,1),\,t\in[0,T],\label{sys:inst1}\\
\partial_{t} m(x,t) + \partial_{x} p(x,t) + d( m(x,t)) &=& \bar{g}(x), \qquad x\in(0,1),\,t\in[0,T],\label{sys:inst2}\\
 p(x,t) &=& \bar{h}(x), \qquad x\in\{0,1\},\,t\in[0,T],\label{sys:inst3}
\end{eqnarray}
with nonlinear damping function $d$ satisfying the following assumptions.
\begin{assumption} \label{ass:damp}
$d \in C^1(\mathbb{R})$ with $d(0) = 0$, $d'(m)>d_0$, and $|d'(m)|\le d_1 + d_2|m|^p$ for some constants 
$d_0>0$ and $d_1,d_2,p\ge0$.
\end{assumption}
These conditions allow us to prove the well-posedness of the above problem.
As a preparatory step, let us consider corresponding stationary problems of the form 
\begin{eqnarray}
\partial_{x}\tilde m(x) &=& \tilde f(x),  \qquad x\in(0,1),\label{sys:st1}\\
\partial_{x}\tilde p(x) + \tilde d(\tilde m(x)) &=& \tilde g(x), \qquad x\in(0,1),\label{sys:st2}\\
\tilde p(x) &=& \tilde h(x), \qquad x\in\{0,1\}.\label{sys:st3}
\end{eqnarray}
Note that solutions of (\ref{sys:st1})--(\ref{sys:st3}) with $(\tilde d,\tilde f,\tilde g,\tilde h) = (d,\bar{f},\bar{g},\bar{h})$ are steady states $(\bar{p},\bar{m})$
for the system (\ref{sys:inst1})--(\ref{sys:inst3}). 
Using Assumption~\ref{ass:damp} and results about nonlinear variational problems under constraints \cite[Proposition 2.3]{Scheurer77}, we obtain the following.
\begin{lemma} \label{lem:stat}
Let Assumption~\ref{ass:damp} hold. Then for any $\tilde f,\tilde g\in L^2(0,1)$ and $\tilde h\in\mathbb{R}^2$ 
the system (\ref{sys:st1})--(\ref{sys:st3}) has a unique solution $(\tilde{p},\tilde{m})\in H^1(0,1) \times H^1(0,1)$ and there exists a constant $c>0$ 
independent of $\tilde d$ and of $\tilde f, \tilde g, \tilde h$, such that
 \begin{eqnarray*}
 \|\tilde{m}\|_{H^1} &\le& \frac{c}{d_0}(\|\tilde g\|_{L^2} + |\tilde h|_1 + d_1\|\tilde f\|_{L^2} + d_2\|\tilde f\|_{L^2}^{p+1}) + c\|\tilde f\|_{L^2} :=M \\
  \|\tilde{p}\|_{H^1} &\le& c\:(\|\tilde g\|_{L^2} + |\tilde h|_1 + d_1M + d_2M^{p+1}).
 \end{eqnarray*}
\end{lemma}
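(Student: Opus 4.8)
The plan is to solve the stationary system \eqref{sys:st1}--\eqref{sys:st3} essentially explicitly and then estimate. From \eqref{sys:st1}, $\tilde m(x) = \tilde m(0) + \int_0^x \tilde f(s)\,ds$, so $\tilde m$ is determined up to the single constant $\tilde m(0)$; equivalently $\partial_x \tilde m = \tilde f$ fixes $\tilde m$ modulo constants. Integrating \eqref{sys:st2} from $0$ to $1$ and using the two boundary values in \eqref{sys:st3} gives the scalar equation $\tilde h(1) - \tilde h(0) + \int_0^1 \tilde d(\tilde m(x))\,dx = \int_0^1 \tilde g(x)\,dx$, which is exactly one equation for the remaining unknown $\tilde m(0)$. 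This is where \cite[Proposition 2.3]{Scheurer77} enters: the map $\tilde m(0) \mapsto \int_0^1 \tilde d(\tilde m(0) + \int_0^x \tilde f)\,dx$ is, by Assumption~\ref{ass:damp}, continuous, and strictly increasing with derivative bounded below by $d_0$ (since $\tilde d' > d_0$), hence a bijection of $\mathbb{R}$; this yields existence and uniqueness of $\tilde m(0)$, and therefore of $\tilde m \in H^1(0,1)$. Once $\tilde m$ is known, $\tilde p$ is recovered from \eqref{sys:st2} by integration, $\tilde p(x) = \tilde h(0) + \int_0^x(\tilde g(s) - \tilde d(\tilde m(s)))\,ds$, which lies in $H^1(0,1)$ and, as a consistency check, must match $\tilde h(1)$ at $x=1$ — this is guaranteed precisely by the scalar equation solved for $\tilde m(0)$.

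For the quantitative bounds I would first control $\tilde m$. Testing \eqref{sys:st2} with $\tilde m - \tilde h^{\mathrm{ext}}$, where $\tilde h^{\mathrm{ext}}$ is the affine function with boundary values $\tilde h$, and integrating by parts using \eqref{sys:st1}, the leading term becomes $\int_0^1 \tilde d(\tilde m)\,(\tilde m - \tilde h^{\mathrm{ext}})\,dx$; writing $\tilde d(\tilde m) = \tilde d(\tilde m) - \tilde d(c^\ast)$ for a suitable constant plus a remainder and exploiting the strict monotonicity $\tilde d' > d_0$ to get coercivity $\int_0^1(\tilde d(\tilde m) - \tilde d(\bar m))(\tilde m - \bar m) \ge d_0\|\tilde m - \bar m\|_{L^2}^2$ against the mean $\bar m$, one obtains an $L^2$ bound on $\tilde m - \bar m$. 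The mean $\bar m$ itself is controlled from the scalar equation above together with $|\tilde d(\bar m)| \le (d_1 + d_2|\bar m|^p)|\bar m|$ and $d(0)=0$; this is the step that produces the $d_1$, $d_2$, and the $\|\tilde f\|_{L^2}^{p+1}$ terms, since $\bar m$ depends on $\tilde f$ through $\int_0^x \tilde f$. Finally $\partial_x \tilde m = \tilde f$ upgrades the $L^2$ bound to the claimed $H^1$ bound on $\tilde m$, with the extra additive $c\|\tilde f\|_{L^2}$; dividing the damping-related contributions by $d_0$ reflects the coercivity constant.

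For $\tilde p$, I would test \eqref{sys:st1} with $\partial_x \tilde p$ (or argue directly): from \eqref{sys:st2}, $\partial_x \tilde p = \tilde g - \tilde d(\tilde m)$, so $\|\partial_x \tilde p\|_{L^2} \le \|\tilde g\|_{L^2} + \|\tilde d(\tilde m)\|_{L^2}$, and the growth bound $|d'(m)| \le d_1 + d_2|m|^p$ with $d(0)=0$ gives $|\tilde d(\tilde m)| \le (d_1 + d_2|\tilde m|^p)|\tilde m|$, hence $\|\tilde d(\tilde m)\|_{L^2} \lesssim d_1\|\tilde m\|_{L^2} + d_2\|\tilde m\|_{L^{2p+2}}^{p+1} \lesssim d_1 M + d_2 M^{p+1}$ after using the one-dimensional embedding $H^1(0,1) \hookrightarrow L^\infty(0,1) \hookrightarrow L^{2p+2}(0,1)$ and the bound $\|\tilde m\|_{H^1} \le M$. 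Combining with the boundary data $\tilde h$ via Poincaré's inequality (the full $H^1$ norm of $\tilde p$ is controlled by $\|\partial_x \tilde p\|_{L^2}$ plus $|\tilde h|_1$) gives the stated estimate for $\|\tilde p\|_{H^1}$.

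The main obstacle is making the constant $c$ genuinely independent of $\tilde d$ (and hence, when applied to the steady state, independent of the particular damping): all dependence on the nonlinearity must be funneled through the structural quantities $d_0$, $d_1$, $d_2$, $p$, and I must be careful that the coercivity argument only uses the lower bound $\tilde d' > d_0$ and that every other estimate only uses the growth bound. A secondary technical point is the nonlinear bookkeeping of the $\|\tilde f\|_{L^2}^{p+1}$ term, which arises because $\bar m$ is an affine-in-$\tilde f$ quantity fed into a polynomially growing nonlinearity; keeping track of this via Young's inequality and the explicit formula for $\tilde m$ is the routine-but-delicate part.
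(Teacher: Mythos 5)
Your argument is essentially correct, but it takes a genuinely different route from the paper, which offers no explicit proof and instead invokes the abstract theory of nonlinear variational (saddle point) problems under constraints from \cite[Proposition 2.3]{Scheurer77}: there one views (\ref{sys:st1})--(\ref{sys:st3}) in mixed form, with $\partial_x \tilde m = \tilde f$ as a constraint, and obtains existence, uniqueness and the quantitative bounds from monotonicity of $\tilde d$ together with the inf-sup condition for the bilinear form $(\tilde p,\partial_x \tilde v)$. Your approach instead solves the one-dimensional problem explicitly: $\partial_x\tilde m=\tilde f$ fixes $\tilde m$ up to the constant $\tilde m(0)$, the integrated momentum equation $\tilde h(1)-\tilde h(0)+\int_0^1\tilde d(\tilde m)\,dx=\int_0^1\tilde g\,dx$ is a scalar equation for that constant, and the map $s\mapsto\int_0^1\tilde d(s+\int_0^x\tilde f)\,dx$ is a bijection of $\mathbb{R}$ with derivative bounded below by $d_0$; this yields both unique solvability and, since $|\int_0^1\tilde d(\int_0^{\cdot}\tilde f)\,dx|\le d_1\|\tilde f\|_{L^2}+d_2\|\tilde f\|_{L^2}^{p+1}$, exactly the claimed bound $M$ with the $1/d_0$ prefactor; the $\tilde p$-estimate then follows from $\partial_x\tilde p=\tilde g-\tilde d(\tilde m)$ and the embedding $H^1\hookrightarrow L^\infty$. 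This is shorter and fully self-contained, but it buys less: the abstract variational argument is the one that transfers verbatim to the Galerkin setting of Lemma~\ref{lem:stath} (where one cannot integrate explicitly in $Q_h\times V_h$) and to networks, which is precisely why the paper sets it up that way. One small criticism: your intermediate ``testing (\ref{sys:st2}) with $\tilde m-\tilde h^{\mathrm{ext}}$'' step is both unnecessary and, as described, circular (integration by parts reintroduces $\tilde p$, which is not yet controlled); the argument that actually closes is the one you give afterwards, namely control of the mean of $\tilde m$ from the integrated scalar equation plus $\|\tilde m-\bar m\|_{L^2}\le c\|\partial_x\tilde m\|_{L^2}=c\|\tilde f\|_{L^2}$ by Poincar\'e--Wirtinger, so you should drop the testing detour.
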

Let us now return to the instationary problem.  
Using the previous result and energy estimates, we can show the following a-priori bounds.
\begin{lemma} \label{lem:apriori}
Let $( p, m)$ be a smooth solution of (\ref{sys:inst1})--(\ref{sys:inst3}). 
Then 
 \begin{eqnarray*}
  \|\partial_{t} p(t)\|_{L^2} + \|\partial_{t} m(t)\|_{L^2} + \| m(t)\|_{H^1} \le c\:(\|\bar{f}\|_{L^2},\|\bar{g}\|_{L^2},|\bar{h}|_1,\| p(0)\|_{H^1},\| m(0)\|_{H^1})
 \end{eqnarray*}
with a constant $c$ depending only on $\bar{f},\bar{g},\bar{h}, p(0), m(0)$ but not on times $t$ and $T$.
\end{lemma}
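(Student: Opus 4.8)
The plan is to combine a shifted energy estimate for the difference to a suitable steady state with the bound from Lemma 2.2, then differentiate the system in time to control the time derivatives. The key point is that the constant must not depend on $t$ or $T$, so every inequality must be of "absorbing" type, with no term that grows in time.

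First I would subtract an auxiliary steady state. Let $(\bar p,\bar m)$ be the solution of the stationary problem (\ref{sys:st1})--(\ref{sys:st3}) with data $(d,\bar f,\bar g,\bar h)$, which exists and is bounded by Lemma~\ref{lem:stat}. Writing $q=p-\bar p$, $w=m-\bar m$, the pair $(q,w)$ solves the homogeneous-boundary system $\partial_t q+\partial_x w=0$, $\partial_t w+\partial_x q+d(m)-d(\bar m)=0$, $q|_{\{0,1\}}=0$. Testing the first equation with $q$ and the second with $w$, integrating over $(0,1)$ and using the boundary condition to cancel the cross terms $\int \partial_x w\,q+\partial_x q\,w$, I get $\tfrac{1}{2}\tfrac{d}{dt}(\|q\|_{L^2}^2+\|w\|_{L^2}^2)=-\int_0^1 (d(m)-d(\bar m))\,w\,dx$. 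By the mean value theorem and $d'>d_0$, the right-hand side is $\le -d_0\|w\|_{L^2}^2\le 0$, so $\|q(t)\|_{L^2}^2+\|w(t)\|_{L^2}^2$ is nonincreasing and hence bounded by its value at $t=0$, which is controlled by $\|p(0)\|_{H^1}$, $\|m(0)\|_{H^1}$ and the steady-state bound. This already gives a $t$-uniform $L^2$-bound on $m(t)$.

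Next I would differentiate (\ref{sys:inst1})--(\ref{sys:inst3}) in time. Setting $\dot p=\partial_t p$, $\dot m=\partial_t m$, these satisfy $\partial_t \dot p+\partial_x\dot m=0$, $\partial_t\dot m+\partial_x\dot p+d'(m)\dot m=0$, $\dot p|_{\{0,1\}}=0$ (since $\bar h$ is time-invariant). Testing with $(\dot p,\dot m)$ as before, the boundary terms cancel and $d'(m)\ge d_0$ gives $\tfrac{1}{2}\tfrac{d}{dt}(\|\dot p(t)\|_{L^2}^2+\|\dot m(t)\|_{L^2}^2)\le -d_0\|\dot m(t)\|_{L^2}^2\le0$. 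Thus $\|\dot p(t)\|_{L^2}+\|\dot m(t)\|_{L^2}$ is bounded by its initial value. To express the initial value through the stated data I would read off $\dot p(0)=-\partial_x m(0)+\bar f$ and $\dot m(0)=-\partial_x p(0)-d(m(0))+\bar g$ from the equations at $t=0$, and bound $\|d(m(0))\|_{L^2}$ using $d(0)=0$ together with $|d'|\le d_1+d_2|m|^p$ (hence $|d(s)|\le d_1|s|+\tfrac{d_2}{p+1}|s|^{p+1}$) and the continuous embedding $H^1(0,1)\hookrightarrow L^\infty(0,1)$.

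Finally, for the $H^1$-bound on $m(t)$, I would use the evolution equations at a fixed time $t$ as stationary relations: from (\ref{sys:inst1}), $\partial_x m(t)=\bar f-\dot p(t)$, so $\|\partial_x m(t)\|_{L^2}\le\|\bar f\|_{L^2}+\|\dot p(t)\|_{L^2}$, which is already controlled; combined with the $L^2$-bound on $m(t)$ this yields the $H^1$-bound. The main obstacle is making the dependence on the superlinear growth exponent $p$ explicit and $t$-uniform: the term $d_2|m|^p$ must be handled only at $t=0$ via $H^1\hookrightarrow L^\infty$, and one must be careful that the differentiated energy estimate genuinely needs no nonlinear absorption (it does not, since $d'(m)\ge d_0$ enters with the right sign), so that no Gronwall factor $e^{ct}$ ever appears. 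A minor technical point is justifying the time-differentiation and the use of smoothness of the solution, which is assumed in the statement.
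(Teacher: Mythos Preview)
Your proposal is correct and follows essentially the same route as the paper: subtract the steady state and use monotonicity of $d$ to get a nonincreasing $L^2$-energy for $(p-\bar p,m-\bar m)$; differentiate in time and use $d'>0$ to get a nonincreasing energy for $(\partial_t p,\partial_t m)$, with initial values read off from the equations at $t=0$; finally recover $\|m(t)\|_{H^1}$ from the first equation. The only cosmetic difference is that the paper bounds $\partial_x m(t)$ via $\|\partial_x m(t)-\partial_x\bar m\|_{L^2}=\|\partial_t p(t)\|_{L^2}$ together with the bound on $\bar m$ from Lemma~\ref{lem:stat}, whereas you use $\partial_x m(t)=\bar f-\partial_t p(t)$ directly; both are equivalent.
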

Here and below, we interpret $p$ and $m$ as functions of time with values in a Hilbert space, and write $p(t)$ and $m(t)$ 
for the corresponding functions of $x$ at time $t$. 
\begin{proof} \smartqed 
Subtracting equations (\ref{sys:st1})--(\ref{sys:st3}) for $(\tilde  f,\tilde g,\tilde h) = (\bar{f},\bar{g},\bar{h})$ from (\ref{sys:inst1})--(\ref{sys:inst3}) yields a problem of the form (\ref{sys:inst1})--(\ref{sys:inst3}) for the functions $( p(t)-\bar{p}, m(t)-\bar{m})$ with $\bar{f},\bar{g},\bar{h} = 0$ and damping term 
$d( m)$ replaced by $d( m(t)) - d(\bar{m})$. 
By testing this problem with the functions $( p(t)-\bar{p}, m(t)-\bar{m})$ and noting that $(d( m)-d(\bar{m}), m-\bar{m}) \ge 0$ due to Assumption~\ref{ass:damp}, 
one can see that
\begin{eqnarray*}
 \| p(t)-\bar{p}\|_{L^2}^2 + \| m(t)-\bar{m}\|_{L^2}^2 &\le& \| p_0-\bar{p}\|_{L^2}^2 + \| m_0-\bar{m}\|_{L^2}^2.
\end{eqnarray*}
Differentiation of (\ref{sys:inst1})--(\ref{sys:inst3}) with respect to time, testing with $(\partial_{t} p(t),\partial_{t} m(t))$, and using that $d'(m)>0$ 
by Assumption~\ref{ass:damp}, further shows that 
\begin{eqnarray*}
 \|\partial_{t} p(t)\|_{L^2}^2 + \|\partial_{t} m(t)\|_{L^2}^2 &\le& \|\partial_{t} p(0)\|_{L^2}^2 + \|\partial_{t} m(0)\|_{L^2}^2.
\end{eqnarray*}
The right-hand side in this estimate can be bounded using (\ref{sys:inst1})--(\ref{sys:inst3}) for $t=0$. 
Then the splitting $\| m(t)\| \le \| m(t)-\bar{m}\| + \|\bar{m}\|$ and $\|\partial_{x} m(t)-\partial_{x}\bar{m}\| = \|\partial_{t} p(t)\|$ together with the bounds of Lemma~\ref{lem:stat} and the previous estimates implies the result. \qed
\end{proof}
We are now in the position to show well-posedness of the instationary problem. 
\begin{lemma} \label{lem:instat}
Let Assumption~\ref{ass:damp} hold. Then for any $\bar{f},\bar{g}\in L^2(0,1)$, any $\bar{h}\in\mathbb{R}^2$, and any $ p_0, m_0\in H^1(0,1)$ 
there exists a unique solution $( p, m)\in C(0,T;H^1\times H^1)\cap C^1(0,T;L^2\times L^2)$ of the system (\ref{sys:inst1})--(\ref{sys:inst3}) 
with initial value $ p(0)= p_0$ and $  m(0)= m_0$.
\end{lemma}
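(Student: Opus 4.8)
The plan is to combine a standard construction of solutions for a suitably truncated problem with the a~priori estimates behind Lemma~\ref{lem:apriori} to obtain a global solution, and to exploit the monotonicity of $d$ for uniqueness; throughout we tacitly impose the natural compatibility condition $p_0=\bar h$ at $x\in\{0,1\}$, which is forced by $(\ref{sys:inst3})$ together with $p\in C(0,T;H^1)$. First I would reduce to homogeneous boundary data: with the affine function $\ell(x)=(1-x)\bar h(0)+x\,\bar h(1)$, the pair $(p-\ell,m)$ satisfies a system of the form $(\ref{sys:inst1})$--$(\ref{sys:inst3})$ with $\bar h\equiv0$, unchanged damping $d$, right-hand sides $\bar f$ and $\bar g-(\bar h(1)-\bar h(0))$ in $L^2(0,1)$, and initial data in $H^1_0(0,1)\times H^1(0,1)$. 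Writing henceforth $\bar f,\bar g$ and $p_0\in H^1_0(0,1)$, $m_0\in H^1(0,1)$ for the data after this reduction, I would recast the problem as the abstract Cauchy problem $w'=\mathcal Aw+\mathcal F(w)$, $w(0)=(p_0,m_0)$, on $X=L^2(0,1)^2$, with $\mathcal A(u,v)=(-\partial_xv,-\partial_xu)$, $D(\mathcal A)=H^1_0(0,1)\times H^1(0,1)$, and $\mathcal F(u,v)=(\bar f,\bar g-d(v))$. A short integration by parts using the boundary condition in the first component gives $\mathcal A^\ast=-\mathcal A$, so by Stone's theorem $\mathcal A$ generates a unitary $C_0$-group on $X$, and the graph norm of $\mathcal A$ is equivalent to the $H^1\times H^1$-norm.

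For existence, the obstruction is that $\mathcal F$ is only locally Lipschitz from $D(\mathcal A)$ into $X$ and, because $d$ may grow polynomially under Assumption~\ref{ass:damp}, not Lipschitz on $X$. I would remove this by truncation: since $\|m_0\|_{L^\infty}<\infty$ by Sobolev embedding, fix any $R>\|m_0\|_{L^\infty}$ and choose a globally Lipschitz, nondecreasing function $d_R$ with $d_R=d$ on $[-R,R]$ and $d_R(0)=0$. Then $\mathcal F_R(u,v)=(\bar f,\bar g-d_R(v))$ is globally Lipschitz on $X$, and the problem $w'=\mathcal Aw+\mathcal F_R(w)$, $w(0)=(p_0,m_0)\in D(\mathcal A)$, has a unique global mild solution by the contraction mapping principle. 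Since the initial value lies in $D(\mathcal A)$, a standard estimate shows this mild solution is Lipschitz in time with values in $X$, so that $t\mapsto\mathcal F_R(w(t))$ is Lipschitz into $X$; the usual regularity result for inhomogeneous evolution equations then promotes it to a classical solution $w_R\in C(0,\infty;D(\mathcal A))\cap C^1(0,\infty;X)$. Undoing the substitutions, $(p_R,m_R)$ solves $(\ref{sys:inst1})$--$(\ref{sys:inst3})$ with $d$ replaced by $d_R$, in the regularity class claimed in the lemma.

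To remove the truncation, the energy estimate $\|\partial_tp_R(t)\|_{L^2}^2+\|\partial_tm_R(t)\|_{L^2}^2\le\|\partial_tp_R(0)\|_{L^2}^2+\|\partial_tm_R(0)\|_{L^2}^2$ from the proof of Lemma~\ref{lem:apriori} is decisive, and establishing it is the main technical point, since differentiating the equations in time is not directly admissible at the available regularity. I would instead apply the energy identity to the time-difference quotient $z_h(t)=h^{-1}(w_R(t+h)-w_R(t))$: as $\bar f,\bar g$ are time-independent these constants cancel and $z_h'=\mathcal Az_h+(0,\,-h^{-1}(d_R(m_R(\cdot+h))-d_R(m_R)))$; testing with $z_h(t)\in D(\mathcal A)$ annihilates the $\mathcal A$-term by skew-adjointness, while the remaining contribution is $\le0$ by monotonicity of $d_R$, so $\|z_h(t)\|_X\le\|z_h(0)\|_X$, and $h\to0$ yields the bound with right-hand side $\|\bar f-\partial_xm_0\|_{L^2}^2+\|\bar g-\partial_xp_0-d(m_0)\|_{L^2}^2$ (here $d_R(m_0)=d(m_0)$ since $R>\|m_0\|_{L^\infty}$). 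Together with $\partial_xm_R=\bar f-\partial_tp_R$, the crude bound $\|m_R(t)\|_{L^2}\le\|m_0\|_{L^2}+t\,\sup_{s}\|\partial_tm_R(s)\|_{L^2}$, and $H^1(0,1)\hookrightarrow L^\infty(0,1)$, this gives $\sup_{t\in[0,T]}\|m_R(t)\|_{L^\infty}\le K$ with $K$ depending only on the data and on $T$, but not on $R$. Choosing $R$ from the outset larger than $\max(\|m_0\|_{L^\infty},K)$ makes the truncation inactive on $[0,T]$, so $(p_R,m_R)$ solves the original problem there; as $T$ was arbitrary, existence follows.

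For uniqueness, if $(p_1,m_1)$ and $(p_2,m_2)$ are solutions with the same data, then testing the difference of $(\ref{sys:inst1})$--$(\ref{sys:inst2})$ with $(p_1-p_2,m_1-m_2)$, integrating by parts (the boundary contributions vanish because $p_1-p_2=0$ at $x\in\{0,1\}$), and using $(d(m_1)-d(m_2),m_1-m_2)_{L^2}\ge0$ from Assumption~\ref{ass:damp} gives $\frac{d}{dt}(\|p_1-p_2\|_{L^2}^2+\|m_1-m_2\|_{L^2}^2)\le0$, so the difference vanishes identically. I expect the only genuinely delicate step to be the difference-quotient justification of the derivative estimate above for solutions of limited time regularity; alternatively, one could first prove global existence for smooth data, to which Lemma~\ref{lem:apriori} applies verbatim, and then pass to the limit using the continuous-dependence estimate just obtained.
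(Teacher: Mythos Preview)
Your proof is correct and follows the same overall strategy as the paper---semigroup theory for existence combined with the a~priori estimates behind Lemma~\ref{lem:apriori} to go global---but the execution differs. The paper's proof is two sentences: it invokes \cite[Theorem~6.1.4]{Pazy83} for local existence of a unique solution, noting that $d$ is locally Lipschitz, and then cites the uniform bounds of Lemma~\ref{lem:apriori} to extend globally. You instead truncate $d$ to a globally Lipschitz $d_R$, obtain a global classical solution of the truncated problem, rederive the time-derivative energy estimate rigorously via difference quotients, and use the resulting $L^\infty$-bound on $m_R$ to show the truncation is inactive on $[0,T]$. Your route is longer but more self-contained: it sidesteps the issue that under Assumption~\ref{ass:damp} the map $m\mapsto d(m)$ is in general not locally Lipschitz on $L^2$ (only on $H^1$, via $H^1(0,1)\hookrightarrow L^\infty$), which the paper's bare citation of Pazy glosses over, and it makes explicit the regularity needed to justify the derivative estimate. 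The paper's approach, in turn, is shorter and yields a $T$-independent $H^1$-bound on $m$ directly from Lemma~\ref{lem:apriori}, whereas your crude estimate $\|m_R(t)\|_{L^2}\le\|m_0\|_{L^2}+t\sup_s\|\partial_tm_R(s)\|_{L^2}$ grows with $T$; this is harmless for existence, but you could equally have used the steady-state comparison from the proof of Lemma~\ref{lem:apriori} to get a uniform bound.
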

\begin{proof} \smartqed
By Assumption~\ref{ass:damp}, the nonlinear damping term $d( m)$ in equation (\ref{sys:inst2}) is locally Lipschitz continuous, and existence of a 
unique solution $( p, m)$ local in time thus follows by semigroup theory; cf. \cite[Theorem 6.1.4]{Pazy83}.
The uniform a-priori estimates of Lemma \ref{lem:apriori} allow to extend the solution globally in time.\qed
\end{proof}
We can now state our first main result, i.e. the exponential decay of the energies
\begin{eqnarray*}
 E( q, v)&:=& \tfrac{1}{2}\| q\|_{L^2}^2 +  \tfrac{1}{2}\| v\|_{L^2}^2
\end{eqnarray*}
for the two choices $( q, v) = ( p(t)-\bar{p}, m(t)-\bar{m})$ and $( q, v) = (\partial_{t} p(t),\partial_{t} m(t))$.
\begin{mytheorem}
 Let $( p, m)$ be a solution of (\ref{sys:inst1})--(\ref{sys:inst3}) provided by Lemma \ref{lem:instat}. Then 
 \begin{eqnarray*}
  E( p(t)-\bar{p}, m(t)-\bar{m}) \le ce^{-\gamma t}\quad\mbox{and}\quad E(\partial_{t} p(t),\partial_{t} m(t)) \le c'e^{-\gamma t},
 \end{eqnarray*}
 for $0\le t\le T$ with $c,c',\gamma>0$ only depending on $\|\bar{f}\|_{L^2},\|\bar{g}\|_{L^2},|\bar{h}|_2,\| p_0\|_{H^1},\| m_0\|_{H^1}$.
\end{mytheorem}
The proof follows in the same way as that of Theorem \ref{thm:decayh} given below, and is therefore omitted. 
Similar results can also be found in \cite{BabinVishik83,GattiPata06,Zuazua88}.

\vspace*{-1em}
\section{Galerkin discretization in space} \label{sec:3}

Let $Q_h\subset L^2(0,1)$ and $V_h\subset H^1(0,1)$ and consider the following Galerkin approximation of 
the stationary problem (\ref{sys:st1})--(\ref{sys:st3}): Find $(\tilde{p}_h,\tilde{m}_h)\in Q_h\times V_h$ such that
\begin{eqnarray}
 (\partial_{x}\tilde{m}_h,{q}_h) &=& (\tilde f,{q}_h), \label{sys:sth1}\\
 - (\tilde{p}_h,\partial_{x}{v}_h) + (\tilde d(\tilde{m}_h),{v}_h) &=& (\tilde g,{v}_h) - \tilde h{v}_h|_0^1, \label{sys:sth2}
\end{eqnarray}
for all ${q}_h\in Q_h$ and ${v}_h\in V_h$. For convenience we write $(\cdot,\cdot):=(\cdot,\cdot)_{L^2}$ in the sequel.
We will assume that the spaces $Q_h$, $V_h$ satisfy the following compatibility conditions.

\pagebreak

\begin{assumption} \label{ass:compat}
$Q_h\subset L^2(0,1)$ and $V_h\subset H^1(0,1)$ are finite dimensional and 
\begin{eqnarray}
 Q_h = \partial_{x} V_h\quad\mbox{and}\quad \mathop{ker}(\partial_{x})\subset V_h.
\end{eqnarray}
\end{assumption}
Well-posedness of the discretized stationary problem (\ref{sys:sth1})--(\ref{sys:sth2}) now follows with the same arguments as used in Lemma~\ref{lem:stat} for the analysis on the continuous level.
\begin{lemma} \label{lem:stath}
Let Assumptions~\ref{ass:damp} and \ref{ass:compat} hold. 
Then for any $\tilde f,\tilde g\in L^2(0,1)$ and $\tilde h\in\mathbb{R}^2$ there exists a unique solution 
$(\tilde p_h,\tilde m_h)\in Q_h\times V_h$ of the system (\ref{sys:sth1})--(\ref{sys:sth2}) and a constant 
$c>0$ independent of $\tilde d$, of $\tilde f, \tilde g, \tilde h$ and of the space $Q_h,V_h$, such that
 \begin{eqnarray*}
 \|\tilde m_h\|_{H^1} &\le&  \frac{c}{d_0}(\|\tilde g\|_{L^2} + |\tilde h|_1 + d_1\|\tilde f\|_{L^2} + d_2\|\tilde f\|_{L^2}^{p+1}) + c\|\tilde f\|_{L^2} :=M \\
  \|\tilde p_h\|_{H^1} &\le& c\:(\|\tilde g\|_{L^2} + |\tilde h|_1 + d_1M + d_2M^{p+1}).
 \end{eqnarray*}
\end{lemma}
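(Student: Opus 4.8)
The plan is to repeat, now on the finite-dimensional spaces $Q_h,V_h$, the argument behind Lemma~\ref{lem:stat}; Assumption~\ref{ass:compat} is precisely what keeps all constants independent of $Q_h,V_h$. One reads (\ref{sys:sth1})--(\ref{sys:sth2}) as the optimality system for minimizing the functional $J(v_h)=\int_0^1 D(v_h)\,dx-(\tilde g,v_h)+\tilde h v_h|_0^1$, with $D'=d$, over the affine set $\{v_h\in V_h:\partial_x v_h=P_{Q_h}\tilde f\mbox{ in }Q_h\}$, where $P_{Q_h}$ is the $L^2$-orthogonal projection onto $Q_h$; the unknown $\tilde p_h\in Q_h$ is the multiplier of the linear constraint (\ref{sys:sth1}). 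Existence, uniqueness, and the a priori bounds then follow from \cite[Proposition~2.3]{Scheurer77} applied verbatim in finite dimensions, once its hypotheses are checked with $h$-uniform constants.

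First I would verify those hypotheses. Since $Q_h=\partial_x V_h$, the operator $\partial_x\colon V_h\to Q_h$ is surjective, so the feasible affine set is nonempty; and since $\ker(\partial_x)\subset V_h$, every $q_h\in Q_h$ has a preimage $v_h\in V_h$ with $\int_0^1 v_h=0$, for which the one-dimensional Poincar\'e inequality gives $\|v_h\|_{H^1}\le c_P\|\partial_x v_h\|_{L^2}=c_P\|q_h\|_{L^2}$ with $c_P$ the continuous Poincar\'e constant. This yields a discrete inf--sup constant $\beta\ge c_P^{-1}$ for $(\partial_x v_h,q_h)$ that is independent of $h$ --- the one place where uniformity could fail. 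Strong convexity of $D$, i.e. $D''=d'>d_0$ by Assumption~\ref{ass:damp}, makes $J$ strictly convex and coercive on the (nonempty) feasible set and hence yields a unique minimizer $\tilde m_h$; the multiplier $\tilde p_h\in Q_h$ is unique because $(\tilde p_h,q_h)=0$ for all $q_h\in Q_h$ forces $\tilde p_h=0$.

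For the quantitative estimates I would copy the computation in Lemma~\ref{lem:stat}. Equation (\ref{sys:sth1}) gives $\|\partial_x\tilde m_h\|_{L^2}\le\|\tilde f\|_{L^2}$; writing $\tilde m_h=\bar m_h+\tilde m_h^\perp$ with $\bar m_h$ constant and $\int_0^1\tilde m_h^\perp=0$, Poincar\'e yields $\|\tilde m_h^\perp\|_{H^1}\le c\|\tilde f\|_{L^2}$. Testing (\ref{sys:sth2}) with the constant $v_h\equiv 1\in\ker(\partial_x)\subset V_h$ annihilates the $\tilde p_h$-term (since $\partial_x 1=0$) and reproduces the scalar identity $\int_0^1 d(\tilde m_h)\,dx=(\tilde g,1)-\tilde h\,1|_0^1$ --- the discrete substitute for the integration by parts used on the continuous level, available exactly because constants lie in $V_h$. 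Using $d(0)=0$, $d'>d_0$, and $|d'(m)|\le d_1+d_2|m|^p$ (hence $|d(m)|\le d_1|m|+\frac{d_2}{p+1}|m|^{p+1}$) to bound $|\bar m_h|$ and then $\|d(\tilde m_h)\|_{L^2}$ gives $\|\tilde m_h\|_{H^1}\le M$. Finally, $\partial_x$ maps $Z_h:=\{v_h\in V_h:\int_0^1 v_h=0\}$ isomorphically onto $Q_h$ with $h$-uniform norm bound, so choosing $v_h=z_h\in Z_h$ with $\partial_x z_h=\tilde p_h$ in (\ref{sys:sth2}) gives $\|\tilde p_h\|_{L^2}^2\le\|z_h\|_{H^1}(\|\tilde g\|_{L^2}+|\tilde h|_1+\|d(\tilde m_h)\|_{L^2})\le c\|\tilde p_h\|_{L^2}(\|\tilde g\|_{L^2}+|\tilde h|_1+d_1M+d_2M^{p+1})$, which is the second bound (read in $L^2$, and in $H^1$ if $Q_h\subset H^1$).

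I do not expect a genuine obstacle here: the difficulty is entirely encoded in Assumption~\ref{ass:compat}, which at once makes the constraint exactly solvable ($Q_h=\partial_x V_h$), transfers the continuous Poincar\'e/inf--sup constant to the discrete level without loss, and keeps available the constant test functions needed for the scalar identity for $\int_0^1 d(\tilde m_h)\,dx$. Granted these three points, the convex-analytic existence and uniqueness and all algebraic manipulations with the growth exponent $p$ are word-for-word those in the proof of Lemma~\ref{lem:stat}.
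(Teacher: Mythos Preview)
Your proposal is correct and follows precisely the route the paper indicates: the paper states only that ``well-posedness of the discretized stationary problem now follows with the same arguments as used in Lemma~\ref{lem:stat}'', which in turn rests on Assumption~\ref{ass:damp} and \cite[Proposition~2.3]{Scheurer77}. Your write-up in fact supplies more detail than the paper does, explicitly isolating why Assumption~\ref{ass:compat} makes the constants $h$-uniform (surjectivity of $\partial_x$, the Poincar\'e/inf--sup constant, and availability of constant test functions); your parenthetical remark that the bound on $\tilde p_h$ is naturally an $L^2$-bound is also apt, since $Q_h\subset L^2$ need not lie in $H^1$.
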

The corresponding discretization of the instationary problem (\ref{sys:inst1})--(\ref{sys:inst3}) reads as follows:  
Find $( p_h, m_h)\in H^1(0,T;Q_h\times V_h)$ such that 
\begin{eqnarray}
(\partial_{t} p_h(t),{q}_h) + (\partial_{x} m_h(t),{q}_h) &=& (\bar{f},{q}_h), \label{sys:insth1}\\
(\partial_{t} m_h(t),{v}_h) - ( p_h(t),\partial_{x}{v}_h) + (d( m_h(t)),{v}_h) &=& (\bar{g},{v}_h) - \bar{h}{v}_h|_0^1, \label{sys:insth2}
\end{eqnarray}
for all ${q}_h\in Q_h$ and ${v}_h\in V_h$, and for $0\le t\le T$. In addition, we require that
\begin{eqnarray}
  p_h(0) =  p_{h,0}\quad\mbox{and}\quad m_h(0) =  m_{h,0}, \label{sys:insth3}
\end{eqnarray}
where $( p_{h,0}, m_{h,0})$ solves problem (\ref{sys:sth1})--(\ref{sys:sth2}) with $(\tilde f,{q}_h) = (\partial_{x} m_0,{q}_h)$ and   $(\tilde g,{v}_h) = (d( m_0),{v}_h)-( p_0,\partial_{x}{v}_h)$. 
By Lemma~\ref{lem:stath}, $ p_{h,0}, m_{h,0}$ and $\partial_{t} p_{h,0},\partial_{t} m_{h,0}$ can be bounded in terms of the data of the continuous problem. 
%
%
In order to prove the existence of a global solution, we proceed similarly as on the continuous level. 
We denote by $(\bar{p}_h,\bar{m}_h)$ the steady states of the system (\ref{sys:insth1})--(\ref{sys:insth2}), which correspond to the solution of 
(\ref{sys:sth1})--(\ref{sys:sth2}) with $(\tilde d,\tilde f,\tilde g,\tilde h) = (d,\bar{f},\bar{g},\bar{h})$, and obtain the following a-priori bounds.
\begin{lemma} \label{lem:apriorih}
Any solution $( p_h, m_h)\in H^1(0,T;Q_h\times V_h)$ of (\ref{sys:insth1})--(\ref{sys:insth3}) with initial values $ p_{h,0}$ and $ m_{h,0}$ as described above, satisfies
 \begin{eqnarray*}
  \|\partial_{t} p_h(t)\|_{L^2} + \|\partial_{t} m_h(t)\|_{L^2} + \| m_h(t)\|_{H^1} \le c\:(\|\bar{f}\|_{L^2},\|\bar{g}\|_{L^2},|\bar{h}|_1,\| p_0\|_{H^1},\| m_0\|_{H^1})
 \end{eqnarray*}
with a constant $c>0$ depending only on $\bar{f},\bar{g},\bar{h}, p_0, m_0$ but not on $t$, $T$, or $Q_h$, $V_h$.
\end{lemma}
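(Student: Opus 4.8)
The plan is to mirror the proof of Lemma~\ref{lem:apriori} on the discrete level, using that Assumption~\ref{ass:compat} provides exactly the structural properties needed to carry the continuous energy estimates through. First I would subtract the stationary equations (\ref{sys:sth1})--(\ref{sys:sth2}) with data $(\bar f,\bar g,\bar h)$ from the instationary equations (\ref{sys:insth1})--(\ref{sys:insth2}), obtaining a problem of the same form for the differences $(p_h(t)-\bar p_h,\,m_h(t)-\bar m_h)$ with vanishing right hand side data and damping term $d(m_h(t))-d(\bar m_h)$. Testing with $q_h=p_h(t)-\bar p_h\in Q_h$ and $v_h=m_h(t)-\bar m_h\in V_h$, the two off-diagonal terms $(\partial_x(m_h-\bar m_h),p_h-\bar p_h)$ and $-(p_h-\bar p_h,\partial_x(m_h-\bar m_h))$ cancel exactly, and the monotonicity $d'(m)>d_0>0$ from Assumption~\ref{ass:damp} makes $(d(m_h)-d(\bar m_h),\,m_h-\bar m_h)\ge 0$, so that $\tfrac{d}{dt}\,E(p_h(t)-\bar p_h,m_h(t)-\bar m_h)\le 0$; hence $\|p_h(t)-\bar p_h\|_{L^2}^2+\|m_h(t)-\bar m_h\|_{L^2}^2\le\|p_{h,0}-\bar p_h\|_{L^2}^2+\|m_{h,0}-\bar m_h\|_{L^2}^2$.

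Next I would differentiate (\ref{sys:insth1})--(\ref{sys:insth2}) with respect to time; since the data are time invariant this gives a linear variational problem for $(\partial_t p_h,\partial_t m_h)$ with the nonlinearity differentiated to $d'(m_h(t))\,\partial_t m_h(t)$. Testing with $q_h=\partial_t p_h(t)$ and $v_h=\partial_t m_h(t)$, the off-diagonal terms again cancel, $(d'(m_h)\partial_t m_h,\partial_t m_h)\ge d_0\|\partial_t m_h\|_{L^2}^2\ge 0$, and we get $\tfrac{d}{dt}\,E(\partial_t p_h(t),\partial_t m_h(t))\le 0$, hence $\|\partial_t p_h(t)\|_{L^2}^2+\|\partial_t m_h(t)\|_{L^2}^2\le\|\partial_t p_h(0)\|_{L^2}^2+\|\partial_t m_h(0)\|_{L^2}^2$. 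The right hand side here is controlled by plugging $t=0$ into (\ref{sys:insth1})--(\ref{sys:insth2}) and using the choice of $(p_{h,0},m_{h,0})$: by construction $\partial_t p_{h,0}$ and $\partial_t m_{h,0}$ satisfy exactly the discrete stationary equations with data built from $m_0,p_0$, so Lemma~\ref{lem:stath} bounds them in terms of $\|p_0\|_{H^1},\|m_0\|_{H^1}$ — crucially with a constant independent of $Q_h,V_h$.

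To finish, I would recover the $H^1$ bound on $m_h(t)$. Using the triangle inequality $\|m_h(t)\|_{L^2}\le\|m_h(t)-\bar m_h\|_{L^2}+\|\bar m_h\|_{L^2}$ with the first step and with Lemma~\ref{lem:stath} for $\|\bar m_h\|_{H^1}$. For the derivative, equation (\ref{sys:insth1}) reads $(\partial_x m_h(t)-\partial_x\bar m_h,q_h)=-(\partial_t p_h(t),q_h)$ for all $q_h\in Q_h$; since Assumption~\ref{ass:compat} gives $\partial_x V_h=Q_h$, the element $\partial_x m_h(t)-\partial_x\bar m_h$ lies in $Q_h$, so testing with $q_h=\partial_x m_h(t)-\partial_x\bar m_h$ yields $\|\partial_x m_h(t)-\partial_x\bar m_h\|_{L^2}=\|\partial_t p_h(t)\|_{L^2}$, which is already bounded. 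Combining with the bound on $\|\partial_x\bar m_h\|_{L^2}$ from Lemma~\ref{lem:stath} gives the claimed estimate. Finally, a standard ODE argument (the discrete problem is a finite system of ODEs, locally solvable by the local Lipschitz property of $d$, and the above a priori bounds are uniform in $t,T$) shows the solution exists globally and the bounds hold on all of $[0,T]$.

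The main obstacle, and the only place where genuine care is needed, is the bound on $\partial_t p_{h,0},\partial_t m_{h,0}$: one must verify that the specific choice of discrete initial data in (\ref{sys:insth3}) makes $(\partial_t p_h(0),\partial_t m_h(0))$ solve a discrete stationary problem whose data $(\partial_x m_0,\ d(m_0),\ p_0)$ are bounded purely in terms of the continuous data, so that the $Q_h,V_h$-independent constant of Lemma~\ref{lem:stath} applies — this is what propagates uniformity through the whole estimate. Everything else is a direct transcription of the continuous argument, the cancellation of the transport terms and the monotonicity of $d$ being exactly what Assumptions~\ref{ass:damp} and~\ref{ass:compat} were set up to guarantee.
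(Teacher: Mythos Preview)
Your proposal is correct and matches the paper's intended approach: the paper does not spell out a separate proof for Lemma~\ref{lem:apriorih} but makes clear, by its placement and by the remark that Lemma~\ref{lem:stath} bounds $p_{h,0},m_{h,0},\partial_t p_{h,0},\partial_t m_{h,0}$ in terms of the continuous data, that the argument is a verbatim transcription of the proof of Lemma~\ref{lem:apriori} to the discrete setting, exploiting Assumption~\ref{ass:compat} for the identity $\|\partial_x m_h(t)-\partial_x\bar m_h\|_{L^2}=\|\partial_t p_h(t)\|_{L^2}$. The only extraneous piece is your last sentence on global existence, which belongs to Lemma~\ref{lem:wph} rather than to the a~priori estimate itself.
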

By the Picard-Lindel\"of theorem, one then obtains the existence of a unique solution.
\begin{lemma} \label{lem:wph}
Let the conditions of Lemma~\ref{lem:instat} and Assumption~\ref{ass:compat} hold. 
Then there exists a unique solution $( p_h, m_h)\in H^1(0,T;Q_h\times V_h)$ of problem (\ref{sys:insth1})--(\ref{sys:insth3}).
\end{lemma}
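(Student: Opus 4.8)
The plan is to combine the standard Picard--Lindel\"of (Cauchy--Lipschitz) theorem for ODE systems on a finite-dimensional space with the uniform a-priori bound of Lemma~\ref{lem:apriorih} to prevent finite-time blow-up. First I would choose bases of $Q_h$ and $V_h$ and rewrite the semidiscrete system \eqref{sys:insth1}--\eqref{sys:insth2} as an ODE system for the coefficient vectors. The mass matrices for $Q_h$ and $V_h$ are symmetric positive definite, hence invertible, so the system can be put in explicit form $\tfrac{d}{dt}y(t) = F(y(t))$, where $y$ collects the coefficients of $p_h(t)$ and $m_h(t)$. The nonlinearity enters only through the term $(d(m_h(t)),v_h)$; since $d\in C^1(\mathbb{R})$ by Assumption~\ref{ass:damp}, the map $m_h\mapsto (d(m_h),v_h)$ is continuously differentiable on the finite-dimensional space $V_h$, and therefore $F$ is locally Lipschitz. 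The Picard--Lindel\"of theorem then yields a unique maximal solution on some interval $[0,T_{\max})$ with $T_{\max}>0$, and $y\in C^1([0,T_{\max}))$; translating back, $(p_h,m_h)\in C^1([0,T_{\max});Q_h\times V_h)\subset H^1_{\mathrm{loc}}$.

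Next I would argue that $T_{\max}\ge T$, i.e. the solution does not escape to infinity before time $T$. The standard continuation criterion states that if $T_{\max}<\infty$ then $\|y(t)\|\to\infty$ as $t\uparrow T_{\max}$. But Lemma~\ref{lem:apriorih} gives, on any interval of existence, the bound $\|\partial_t p_h(t)\|_{L^2}+\|\partial_t m_h(t)\|_{L^2}+\|m_h(t)\|_{H^1}\le c$ with $c$ independent of $t$ and $T$; since $\|p_h(t)\|_{H^1}\le\|\bar p_h\|_{H^1}+\|p_h(t)-\bar p_h\|_{L^2}+\|\partial_x p_h(t)-\partial_x\bar p_h\|_{L^2}$ and $\|\partial_x p_h(t)-\partial_x\bar p_h\|_{L^2}=\|\partial_t m_h(t)-(\bar g - d(m_h)\text{-terms})\|$ is likewise controlled (alternatively one bounds $\|p_h(t)\|_{L^2}$ directly via the energy estimate used to prove Lemma~\ref{lem:apriorih}, together with $\|\partial_x p_h(t)\|_{L^2}$ from \eqref{sys:insth2} tested suitably), all norm-equivalent quantities, in particular $\|y(t)\|$, stay bounded by a constant depending only on $\bar f,\bar g,\bar h,p_0,m_0$. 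This contradicts blow-up, so $T_{\max}=+\infty$ and in particular the solution exists on $[0,T]$. Uniqueness on $[0,T]$ is inherited from the uniqueness part of Picard--Lindel\"of.

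Finally I would check the initial and regularity conditions: the initial data $(p_{h,0},m_{h,0})$ is well-defined and lies in $Q_h\times V_h$ by Lemma~\ref{lem:stath} applied to the stationary problem with the indicated right-hand sides, so \eqref{sys:insth3} makes sense and is satisfied by construction; and $C^1([0,T];Q_h\times V_h)\hookrightarrow H^1(0,T;Q_h\times V_h)$ on the bounded interval $[0,T]$, which gives the claimed regularity. The main obstacle, and the only point requiring a little care, is the a-priori bound feeding the continuation argument: one must be sure that Lemma~\ref{lem:apriorih} indeed controls \emph{all} of $y(t)$, i.e. also $\|p_h(t)\|_{L^2}$ (which is not literally among the three listed terms but follows from the $L^2$-energy estimate $\|p_h(t)-\bar p_h\|_{L^2}^2+\|m_h(t)-\bar m_h\|_{L^2}^2\le\|p_{h,0}-\bar p_h\|_{L^2}^2+\|m_{h,0}-\bar m_h\|_{L^2}^2$ used in its proof, combined with Lemma~\ref{lem:stath}), since the continuation theorem only excludes blow-up of the full state vector. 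Everything else is routine.
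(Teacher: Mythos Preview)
Your proposal is correct and follows exactly the approach the paper indicates: reduce to a finite-dimensional ODE, invoke Picard--Lindel\"of for local well-posedness (using $d\in C^1$ for local Lipschitz continuity), and then use the uniform a-priori bounds of Lemma~\ref{lem:apriorih} to rule out blow-up and extend globally. The paper in fact gives no further detail beyond the sentence ``By the Picard--Lindel\"of theorem, one then obtains the existence of a unique solution''; your write-up is a faithful expansion of that. One small slip: $Q_h\subset L^2(0,1)$ only, so $\|p_h(t)\|_{H^1}$ is not meaningful---but you already note the correct fix, namely controlling $\|p_h(t)\|_{L^2}$ via the energy estimate $\|p_h(t)-\bar p_h\|_{L^2}^2+\|m_h(t)-\bar m_h\|_{L^2}^2\le\|p_{h,0}-\bar p_h\|_{L^2}^2+\|m_{h,0}-\bar m_h\|_{L^2}^2$ together with Lemma~\ref{lem:stath}.
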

We are now in the position to prove the main result of our paper.
\begin{mytheorem} \label{thm:decayh}
 Under the assumptions of Lemma~\ref{lem:stath} and \ref{lem:wph}, there holds
 \begin{eqnarray*}
  E( p_h(t)-\bar{p}_h, m_h(t)-\bar{m}_h) \le ce^{-\gamma t}\quad\mbox{and}\quad E(\partial_{t} p_h(t),\partial_{t} m_h(t)) \le c'e^{-\gamma t},
 \end{eqnarray*}
for all $0\le t\le T$ with constants $c,c',\gamma>0$ depending only on the data. 
\end{mytheorem}
In particular, the estimate is independent of $T$ and the choice of the spaces $Q_h,V_h$.
\begin{proof} \smartqed 
For any $t\in[0,T]$ the difference $(\tilde p_h,\tilde m_h):=( p_h(t)-\bar{p}_h, m_h(t)-\bar{m}_h)$ satisfies (\ref{sys:sth1})--(\ref{sys:sth2}) with $\tilde f = \partial_{t} p_h(t)$, $\tilde g = \partial_{t} m_h(t)$, and damping  $\tilde d(\tilde m_h) := d(\tilde m_h + \bar{m}_h)-d(\bar{m}_h)$. From Assumption~\ref{ass:damp}, one can deduce that $\tilde d'(m) \ge d_0 > 0 $ and
\[
|\tilde d'(m)| = |d'(m+\bar{m}_h)| \le d_1 + d_2 |m+\bar{m}_h|^p \le \tilde d_1 + \tilde d_2 |m|^p, 
\]
for some constants $\tilde d_1$, $\tilde d_2$ depending only on $d_1$, $d_2$, $p$ and the norm of the steady state $\bar{m}_h$, which is 
bounded uniformly by Lemma~\ref{lem:stath} in terms of the data.
Therefore, the a-priori estimates of Lemma~\ref{lem:stath} apply and we can further estimate the terms $\|\tilde f\|_{L^2}^p$ and $M^p$ 
appearing in the estimate of Lemma~\ref{lem:stath} by Lemma \ref{lem:apriorih}. As a consequence
\begin{eqnarray} \label{eq:mdiffh}
  \| p_h(t)-\bar{p}_h\|_{L^2} + \| m_h(t)-\bar{m}_h\|_{L^2} \le c\:(\|\partial_{t} p_h(t)\|_{L^2} + \|\partial_{t} m_h(t)\|_{L^2})
\end{eqnarray}
with some constant $c$ independent of $t$, $T$, and of the spaces $Q_h,V_h$. 
Let us define a modified energy $E_{h,\varepsilon}^1 := E(\partial_{t} p_h,\partial_{t} m_h) + \varepsilon(\partial_{t} m_h, m_h-\bar{m}_h)_{L^2}$ and note that 
\begin{equation} \label{eq:equivE}
 \tfrac{1}{2} E(\partial_{t} p_h,\partial_{t} m_h) \le E_{h,\varepsilon}^1  \le \tfrac{3}{2} E(\partial_{t} p_h,\partial_{t} m_h) 
\end{equation}
for all parameters $0 \le \varepsilon \le\varepsilon^*$ sufficiently small, i.e., the two energies are equivalent. 
From (\ref{sys:insth1})--(\ref{sys:insth2}), one can further deduce that 
\begin{eqnarray*}
 \frac{d}{dt}E_{h,\varepsilon}^1 &=& \frac{d}{dt}E(\partial_{t} p_h,\partial_{t} m_h) + \varepsilon\|\partial_{t} m_h\|_{L^2}^2 + \varepsilon(\partial_{tt} m_h, m_h-\bar{m}_h)_{L^2} \\
 &\le& -(d_0-\varepsilon)\|\partial_{t} m_h\|_{L^2}^2 + \varepsilon(\partial_{tt} m_h, m_h-\bar{m}_h)_{L^2}.
\end{eqnarray*}
The second term in this estimate can be bounded by
\begin{eqnarray*}
 (\partial_{tt} m_h, m_h-\bar{m}_h)_{L^2} &=& (\partial_{t} p_h,\partial_{x}( m_h-\bar{m}_h))_{L^2} - (d'( m_h)\partial_{t} m_h, m_h-\bar{m}_h)_{L^2} \\
 &\le& -\|\partial_{t} p_h\|_{L^2}^2 +  c\|\partial_{t} m_h\|_{L^2}\| m_h-\bar{m}_h\|_{L^2} \\
 &\le& -\tfrac{1}{2}\|\partial_{t} p_h\|_{L^2}^2 + \tilde c\|\partial_{t} m_h\|_{L^2}^2,
\end{eqnarray*}
where the global a-priori bounds in Lemma \ref{lem:apriorih}, equation (\ref{eq:mdiffh}), and the assumptions on $d$ were used. 
By choosing $\varepsilon^*>0$ sufficiently small, we can conclude that
\begin{eqnarray*}
 \frac{d}{dt}E_{h,\varepsilon}^1 \le -\varepsilon E(\partial_t p_h,\partial_t m_h) \le -\tfrac{2\varepsilon}{3} E_{h,\varepsilon}^1, \qquad \mbox{for all } 0 < \varepsilon \le \varepsilon^*.
\end{eqnarray*}
By integration in time, this yields $E_{h,\varepsilon}^1(t) \le e^{-\frac{2\varepsilon}{3}} E_{h,\varepsilon}^1(0)$, 
and using the equivalence  of the two energies (\ref{eq:equivE}), we obtain the second estimate of the theorem.
With the help of inequality (\ref{eq:mdiffh}), we also obtain the first estimate. \qed
\end{proof}
\begin{remark} \label{rem:approx}
In the next section, we will make use of the following simple observation:
Let $(\cdot,\cdot)_h$ be a semi inner product which is equivalent to $(\cdot,\cdot)_{L^2}$ on $V_h$, i.e., 
\begin{equation} \label{eq:equiv}
 \tfrac{1}{2} \|{v}_h\|_{L^2} \le \|{v}_h\|_h \le \tfrac{3}{2}\|{v}_h\|_{L^2}\quad\mbox{for all }{v}_h\in V_h.
\end{equation}
Then the assertions of Theorem~\ref{thm:decayh} remain valid when replacing  $(\partial_{t} m_h(t),{v}_h)$ 
and $(d( m_h(t)),{v}_h)$ in problem (\ref{sys:insth1})--(\ref{sys:insth3}) by the approximations $(\partial_{t} m_h(t),{v}_h)_h$ and $(d( m_h(t)),{v}_h)_h$, 
which can be verified by a close inspection of the previous proof. 
This modification may substantially simplify the numerical solution.
\end{remark}

\section{Approximation schemes} \label{sec:4}

After a basis is chosen for $Q_h$ and $V_h$, the discretized system (\ref{sys:insth1})--(\ref{sys:insth2}) reads
\begin{eqnarray}
\mathbf{M}_p \partial_{t}\mathbf{p}(t) + \mathbf{G} \mathbf{m}(t) &=& \mathbf{f}(t), \label{sys:instha1}\\
\mathbf{M}_m \partial_{t}\mathbf{m}(t) - \mathbf{G}^T\mathbf{p}(t) + \mathbf{D}(\mathbf{m}(t))\mathbf{m}(t) &=& \mathbf{g}(t) - \mathbf{B} \mathbf{h}(t). \label{sys:instha2}
\end{eqnarray}
Here $\mathbf{p}, \mathbf{m}$ are the coordinate vectors for the functions $ p_h$, $ m_h$.
Following Remark~\ref{rem:approx}, we define quadrature points $\xi_n$ and weights $\omega_n$, and we set
\begin{eqnarray} \label{eq:quadratur}
( v,\tilde  v)_h := \sum_{n=0}^N \omega_n  v(\xi_n) \tilde  v(\xi_n), \qquad \mbox{ for } v, \tilde  v \in H_1.
\end{eqnarray}
We now discuss some typical choices for the subspaces $Q_h$, $V_h$ for method (\ref{sys:insth1})--(\ref{sys:insth2}). 
\begin{example}[Finite element method] \label{ex:hfem}
Let $T_h$ be a uniform mesh with nodes $x_n = nh$, $h=1/N$, and let $P_p(T_h)$ be the space of piecewise polynomials of order $p$. 
We set $Q_h = P_0(T_h)$ and $V_h = P_1(T_h)\cap H^1$ and note that Assumption \ref{ass:compat} is satisfied.
We further choose $\xi_n = x_n$ and $\omega_0= \omega_N = h/2$ and $\omega_n = h$ for $0<n<N$ for (\ref{eq:quadratur}), which 
corresponds to numerical quadrature with the trapezoidal rule, and note that (\ref{eq:equiv}) is fulfilled. 
Moreover, the matrices $\mathbf{M}_p,\mathbf{M}_m$, and $\mathbf{D}(\mathbf{m})$ are all diagonal and approximation order $h^2$ can be expected 
for sufficiently smooth solutions.
\end{example}

\begin{example}[Spectral method] \label{ex:spectral}
For $Q_h = P_{p-1}(0,1)$ and $V_h = P_p(0,1)\cap H^1$, Assumption \ref{ass:compat} holds as well. 
Now let $\xi_n$ and $\omega_n$, $0\le n \le p$, be the quadrature points and weights for the Gauss-Lobatto quadrature rule on $[0,1]$,
then also norm equivalence (\ref{eq:equiv}) is valid; cf. \cite{CanutoEtAl}, 
When choosing the Lagrange polynomials for the points $\{\xi_n\}_n$ as basis for $V_h$ and the Legendre polynomials as basis for $Q_h$, 
the matrices $\mathbf{M}_p,\mathbf{M}_m$, and $\mathbf{D}(\mathbf{m})$ are again diagonal. 
Here exponential convergence in $p$ can expected for smooth solutions \cite{CanutoEtAl}.
\end{example}

\begin{example}[Projection based model reduction] \label{ex:mor}
Let $Q_h,V_h,\omega_n,\xi_n$ be chosen as in Example 1 for small $h$ and let $Q_H\subset Q_h$, $V_H\subset V_h$ be constructed by a structure preserving model reduction approach \cite{BennerMehrmannSorensen05}, together with the modifications proposed in \cite{EggerKuglerEtAl18}. 
Then Assumption~\ref{ass:compat} holds and $\mathbf{M}_p$, $\mathbf{M}_m$ are diagonal for an appropriate choice of basis. 
Note that the evaluation of the nonlinear term $\mathbf{D}(\mathbf{m}(t))$ via (\ref{eq:quadratur}) still has the complexity of the high dimensional space $V_h$. Replacing (\ref{eq:quadratur}) by a quadrature rule with fewer quadrature points may be used to further reduce the complexity \cite{AntilEtAl13}. Let us note that uniform exponential stability can still be guaranteed for this complexity-reduction approach, as long as (\ref{eq:equiv}) is valid. 
\end{example}

\section{Numerical illustration} \label{sec:5}

Let us note that our results and methods of proof can be generalized almost verbatim to networks; see \cite{EggerKugler18}.
This will be illustrated now by some numerical tests, for which we utilize the network in Fig.~\ref{fig:network}. 
\begin{figure}[ht!]
\begin{center}
  \begin{tikzpicture}[scale=1.8]
  \node[circle,draw,inner sep=2pt] (v1) at (-1.87,0) {$v_1$};
  \node[circle,draw,inner sep=2pt] (v2) at (-0.87,0) {$v_2$};
  \node[circle,draw,inner sep=2pt] (v3) at (0,0.5) {$v_3$};
  \node[circle,draw,inner sep=2pt] (v4) at (0,-0.5) {$v_4$};
  \node[circle,draw,inner sep=2pt] (v5) at (0.87,0) {$v_5$};
  \node[circle,draw,inner sep=2pt] (v6) at (1.87,0) {$v_6$};
  \draw[->,thick,line width=1.5pt] (v1) -- node[above,sloped] {$e_1$} ++ (v2);
  \draw[->,thick,line width=1.5pt] (v2) -- node[above,sloped] {$e_2$} ++ (v3);
  \draw[->,thick,line width=1.5pt] (v2) -- node[above,sloped] {$e_3$} ++ (v4);
  \draw[->,thick,line width=1.5pt] (v3) -- node[above,sloped] {$e_4$} ++ (v4);
  \draw[->,thick,line width=1.5pt] (v3) -- node[above,sloped] {$e_5$} ++ (v5);
  \draw[->,thick,line width=1.5pt] (v4) -- node[above,sloped] {$e_6$} ++ (v5);
  \draw[->,thick,line width=1.5pt] (v5) -- node[above,sloped] {$e_7$} ++ (v6);
  \end{tikzpicture}
  \end{center}
    \vspace*{-1em}
 \caption{Network used for numerical tests.\label{fig:network}}
\end{figure}
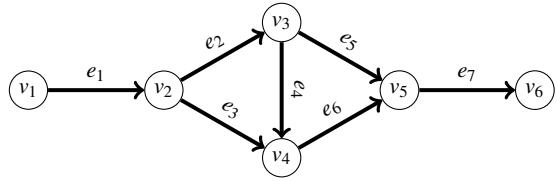
The topology of the network is represented by a directed graph $(\mathcal{V},\mathcal{E})$ with vertices $\mathcal{V} = \{v_1,\ldots,v_6\}$, divided into interior and exterior vertices $\mathcal{V}_{0}~=~\{v_2,\ldots,v_5\}$ and $\mathcal{V}_\partial = \{v_1,v_6\}$, and edges $\mathcal{E} = \{e_1,\ldots,e_7\}\subset\mathcal{V}\times\mathcal{V}$.
We denote by $\mathcal{E}(v) = \{e=(v,\cdot)\mbox{ or }e=(\cdot,v)\}$ the set of edges adjacent to the vertex $v$ and define 
$n^e(v) = -1$ for ingoing and $n^e(v) = 1$ for the outgoing pipes.

We then consider the following problem on the network: 
For every pipe $e\in\mathcal{E}$, the solution $( p^e, m^e)$ restricted to the pipe should satisfy (\ref{sys:inst1})--(\ref{sys:inst2}) 
with data $\bar{f},\bar{g} \equiv 0$, and damping function $d( m^e) = | m^e| m^e$. 
At the interior vertices of the network, the solution is required to satisfy the coupling conditions
\begin{eqnarray*}
  p^e(v,t) &=&  p^{e'}(v,t),\quad\mbox{ for all }e,e'\in\mathcal{E}(v),\;v\in\mathcal{V}_0,\;t>0,\\ 
 \sum_{e \in \mathcal{E}(v)}\!\!\!n^e(v) m^e(v,t) &=& 0,\qquad\quad\!\mbox{ for all }v\in\mathcal{V}_0,\;t>0,
\end{eqnarray*}
and we prescribe time dependent boundary conditions 
\begin{eqnarray*}
  p(v_1,t) = 90 + 10\max\{(1-t),0\} \quad\mbox{and}\quad  p(v_6,t) = 70.
\end{eqnarray*}
As initial conditions $p(0)$, $m(0)$, we choose the stationary solutions for the boundary data at time $t=0$.
The time discretization is chosen sufficiently accurate such that time integration errors can be neglected. 
For $T = 50$, we depict in Table~\ref{tab:exp} the exponential convergence of all methods. The POD method with $n_{sv}$ singular values is trained by an h-FEM method with $h = 10^{-3}$ and the correct boundary data. We choose $N$ Gauss-Lobatto points on each pipe such that (\ref{eq:equiv}) is satisfied. As predicted in Theorem~\ref{thm:decayh} the exponential decay is uniform in the discretization parameters. 

\setlength{\tabcolsep}{0.3em}
\begin{table}[ht!]
\renewcommand{\arraystretch}{1.2}
\small 
\begin{center}
\begin{tabular}{l||cccccc||c}
method$\setminus t^n$ & $0$ & $10$ & $20$ & $30$ & $40$ & $50$ & $\gamma$ \\
\hline
$\;$ Ex. \ref{ex:hfem}; $h = 0.2\;$& $99.136$ & $23.693$ & $6.943$ & $2.051$ & $0.607$ & $0.180$ & $0.122$ \\ 
$\;$ Ex. \ref{ex:hfem}; $h = 0.05\;$& $99.192$ & $23.709$ & $6.947$ & $2.052$ & $0.607$ & $0.180$ & $0.122$ \\ 
\hline
$\;$ Ex. \ref{ex:spectral}; $p = 3\;$& $99.196$ & $23.904$ & $7.005$ & $2.069$ & $0.613$ & $0.182$ & $0.122$ \\ 
$\;$ Ex. \ref{ex:spectral}; $p = 10\;$& $99.196$ & $23.710$ & $6.947$ & $2.052$ & $0.607$ & $0.180$ & $0.122$ \\ 
\hline
$\;$ Ex. \ref{ex:mor}; $n_{sv} = 2$ & $99.196$ & $23.850$ & $6.984$ & $2.062$ & $0.610$ & $0.181$ & $0.122$ \\ 
$\;$ Ex. \ref{ex:mor}; $n_{sv} = 10\;$& $99.196$ & $23.710$ & $6.947$ & $2.052$ & $0.607$ & $0.180$ & $0.122$ \\ 
\end{tabular}
\medskip
\caption{
Exponential convergence of $E( p_h(t)-\bar{p}_h, m_h(t)-\bar{m}_h)$ for the methods in Example \ref{ex:hfem}-\ref{ex:mor}.
}
\label{tab:exp}
\end{center}
\end{table}


\begin{acknowledgement}
The authors would like to gratefully acknowledge financial support by the German Research Foundation (DFG) via grants GSC~233, TRR~146, TRR~154, and Eg-331/1-1.
\end{acknowledgement}

\end{document}